\documentclass{article}

\usepackage{amsmath,amssymb,amsthm}
\newtheoremstyle{slplain}% name
  {\topsep}%Space above
  {\topsep}%Space below
  {\slshape}%Body font
  {0pt}%Indent amount
  {\bfseries}% Theorem head font
  {.}%Punctuation after theorem head
  {0.5em}%Space after theorem head 2
  {}%Theorem head spec (can be left empty, meaning ‘normal’)

\theoremstyle{slplain}
  \newtheorem{THM}{Theorem}[section]

\theoremstyle{definition}

\usepackage{color}
\usepackage{hyperref}
\hypersetup{colorlinks=true}

\newcommand{\spec}{\mathrm{spec}}
\newcommand{\Fraisse}{Fra\"\i ss\'e}
\newcommand{\Emb}{\mathrm{Emb}}

\newcommand{\calA}{\mathcal{A}}
\newcommand{\calB}{\mathcal{B}}
\newcommand{\calC}{\mathcal{C}}

\newcommand{\calH}{\mathcal{H}}

\newcommand{\calS}{\mathcal{S}}
\newcommand{\calU}{\mathcal{U}}

\newcommand{\NN}{\mathbb{N}}
\newcommand{\ZZ}{\mathbb{Z}}
\newcommand{\QQ}{\mathbb{Q}}

\newcommand{\KK}{\mathbf{K}}

\newcommand{\Boxed}[1]{\hbox{$#1$}}

\newcommand{\union}{\cup}
\newcommand{\UNION}{\bigcup}
\newcommand{\0}{\emptyset}
\renewcommand{\ge}{\geqslant}
\renewcommand{\le}{\leqslant}
\renewcommand{\phi}{\varphi}

\title{A short note on the characterization of\\countable chains with finite big Ramsey spectra}
\author{%
  Keegan Dasilva Barbosa\thanks{%
  Fields Institute for Research in Mathematical Science,
  222 College St, Toronto, ON M5T 3J1, Canada,
  keegan.dasilvabarbosa@mail.utoronto.ca},
  Dragan Ma\v sulovi\'c\thanks{%
  Department of Mathematics and informatics,
  Faculty of Sciences, University of Novi Sad,
  Trg Dositeja Obradovi\'ca 3, 21000 Novi Sad, Serbia,
  masul@dmi.uns.ac.rs} and
  Rajko Nenadov\thanks{%
  School of Computer Science, University of Auckland, New Zealand,
  rajko.nenadov@auckland.ac.nz}
}

\begin{document}
\maketitle

\begin{abstract}
  In this short note we confirm the deep structural correspondence between the complexity of a countable 
  scattered chain ($=$ strict linear order) and its big Ramsey combinatorics: we show that a countable scattered chain
  has finite big Ramsey degrees if and only if it is of finite Hausdorff rank.
  This also provides a complete characterization of countable chains whose big Ramsey spectra are finite.
  We expand the notion of big Ramsey spectrum to monomorphic structures
  and give a sufficient condition for a monomorphic countable structure to have finite big Ramsey spectrum.
\end{abstract}

\section{Introduction}

The Infinite Ramsey Theorem, usually abbreviated using the Erdo\H os-Rado partition arrow as
$\omega \longrightarrow (\omega)_2^n$ $(n \in \NN)$, is historically the first statement about 
Ramsey combinatorics of infinite structures. The importance of this statement was immediately recognized
and already in the early 1930's we have the first results of Sierpi\'nski pertaining to alephs,
which initiated the entire research field of combinatorial set theory. The generalization
of Ramsey's ideas in the orthogonal direction by Erd\H os, Rado, Ne\v set\v ril, R\"odl and
many more gave us in the early 1970's the structural Ramsey theory. 
The investigation of Ramsey combinatorics of
countable structures, and in particular, big Ramsey degrees was suggested by Kechris, Pestov and Todor\v cevi\'c
in~\cite{KPT}. The so-called KPT-correspondence describes a fascinating relationship between phenomena
in structural Ramsey theory and in topological dynamics of automorphism groups of \Fraisse\ limits.

In~\cite{masul-sobot} we presented an analysis of big Ramsey combinatorics of
countable structures that are as far away from \Fraisse\ limits as possible -- countable ordinals.
We would like to point out a related research in the topological setting~\cite{Gheysens},
as well as a recent paper where the big Ramsey degrees for countable ordinals have actually been computed~\cite{boyland-et-al}.
Interestingly, big Ramsey combinatorics of nonscattered chains is easy, as was
demonstrated by Galvin, Laver and Devlin \cite{galvin1,galvin2,devlin}. The situation with
scattered countable chains is more intriguing. It was shown in \cite{masul-fbrd-chains} that countable scattered
chains of infinite Hausdorff rank do not have finite big Ramsey spectra, and that \emph{some}
countable scattered chains of finite Hausdorff rank have finite big Ramsey spectra. 

In this short note we confirm the deep structural correspondence between the complexity of a countable 
scattered chain and its big Ramsey combinatorics: we show that a countable scattered chain
has finite big Ramsey degrees if and only if it is of finite Hausdorff rank.
This also presents a solution to the open problem in \cite{masul-fbrd-chains} and
provides a complete characterization of countable chains whose big Ramsey spectra are finite.

Big Ramsey spectra naturally generalize to the class of monomorphic structures introduced by \Fraisse~\cite{Fraisse-ThRel}.
An infinite relational structure $\calA$ is \emph{monomorphic}~\cite{Fraisse-ThRel} if, for each $n \in \NN$,
all the $n$-element substructures of $\calA$ are isomorphic.
The intimate relationship between chains and monomorphic structures will be presented in Section~\ref{fbrd-finale.sec.mnmf}.
A \emph{big Ramsey spectrum} of a monomorphic structure $\calS$, denoted by $\spec(\calS)$, is a sequence
$(T_1, T_2, T_3, \ldots)$ where $T_n \in \NN \cup \{\infty\}$ is the big Ramsey degree of the (only) $n$-element substructure of $\calS$.
We say that $\spec(\calS)$ is \emph{finite} if $T_n \in \NN$ for all $n \in \NN$.
The purpose of this short note is to give the complete characterization of countable chains with finite big Ramsey spectra,
and provide a sufficient condition for a monomorphic countable structure to have finite big Ramsey spectrum.
We conclude the paper with an open problem.

\section{Preliminaries}

Let $L$ be a relational language. For $L$-structures $\calA$ and $\calB$ let
$\Emb(\calA, \calB)$ denote the set of all the embeddings $\calA \hookrightarrow \calB$.
For $L$-structures $\calA$, $\calB$, $\calC$ and positive integers $k, t \in \NN$
we write $\calC \longrightarrow (\calB)^{\calA}_{k, t}$ to denote that for every $k$-coloring $\chi : \Emb(\calA, \calC) \to k$
there is an embedding $w \in \Emb(\calB, \calC)$ such that $|\chi(w \circ \Emb(\calA, \calB))| \le t$.
We say that $\calA$ has \emph{finite big Ramsey degree in $\calC$}
if there exists a positive integer $t$ such that for each $k \in \NN$ we have that
$\calC \longrightarrow (\calC)^{\calA}_{k, t}$.
The least such $t$ is then denoted by $T(\calA, \calC)$. If such a $t$ does not exist
we say that $\calA$ \emph{does not have finite big Ramsey degree in $\calC$} and write
$T(\calA, \calC) = \infty$. Finally, we say that an infinite $L$-structure $\calC$ \emph{has finite big Ramsey degrees}
if $T(\calA, \calC) < \infty$ for every finite substructure $\calA$ of~$\calC$.

For any monomorphic structure $\calS$ there is, up to isomorphism, only one $n$-element substructure,
so it is convenient to consider the \emph{big Ramsey spectrum of $\calS$}:
$$
  \spec(\calS) = (T(\calS_1, \calS), T(\calS_2, \calS), T(\calS_3, \calS), \ldots) \in (\NN \union \{\infty\})^\NN,
$$
where $\calS_n$ is the only $n$-element substructure of $\calS$. We then say that $\calS$
\emph{has finite big Ramsey spectrum} if $T(\calS_n, \calS) < \infty$ for all $n \ge 1$,
that is, if $\spec(\calS) \in \NN^\NN$.

For a linear order $<$ on $A$, its \emph{reverse} is the linear order $<^*$
defined so that $x \mathrel{<^*} y$ if and only if $y < x$.
When the ordering relation is not required explicitly or is obvious from the context,
we shall identify the chain with its underlying set, and the reverse of $A$ will be denoted by~$A^*$.

\section{Countable chains}

A chain $A$ is \emph{scattered} if $\QQ \not\hookrightarrow A$; otherwise it is \emph{non-scattered}.
In 1908 Hausdorff published a structural characterization of scattered chains~\cite{hausdorff-scat}, which was
rediscovered by Erd\H os and Hajnal in their~1962 paper~\cite{erdos-hajnal-scat}.
Define a sequence $\calH_\alpha$ of sets of chains indexed by ordinals as follows:
\begin{itemize}
\item
  $\calH_0 = \{0, 1\}$ -- the empty chain $\0$ and the 1-element chain 1;
\item
  for an ordinal $\alpha > 0$ let
  $
    \calH_\alpha = \{\sum_{i \in \ZZ} S_i : S_i \in \UNION_{\beta < \alpha} \calH_\beta \text{ for all } i \in \ZZ \}
  $.
\end{itemize}
Hausdorff then shows in~\cite{hausdorff-scat} that
  for each ordinal $\alpha$ the elements of $\calH_\alpha$ are countable scattered chains; and
  for every countable scattered chain $S$ there is an ordinal $\alpha$ such that $S$ is order-isomorphic
  to some chain in $\calH_\alpha$.
The least ordinal $\alpha$ such that $\calH_\alpha$ contains a chain order-isomorphic to a
countable scattered chain $S$ is referred to as the \emph{Hausdorff rank of $S$} and denoted by $r_H(S)$.
A countable scattered chain $S$ has \emph{finite Hausdorff rank} if $r_H(S) < \omega$;
otherwise it has \emph{infinite Hausdorff rank}.

\begin{THM}\label{fbrd-finale.thm.MAIN}
  Let $C$ be a countable chain. Then $\spec(C)$ is finite if and only if $C$ is non-scattered, or
  $C$ is a scattered chain of finite Hausdorff rank.
\end{THM}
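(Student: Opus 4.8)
The plan is to split the biconditional into three one-directional implications, two of which are already available. Writing out the negation of the right-hand side, the statement is equivalent to ``$\spec(C)$ is infinite iff $C$ is scattered of infinite Hausdorff rank''. For the implication ``scattered of infinite rank $\Rightarrow$ infinite spectrum'' I would simply invoke the result of \cite{masul-fbrd-chains}. For the contrapositive I must show that every countable chain which is either non-scattered or scattered of finite Hausdorff rank has finite $\spec$. The non-scattered case is the ``easy'' one demonstrated by Galvin, Laver and Devlin \cite{galvin1,galvin2,devlin} (such a chain carries a copy of $\QQ$ and is governed by the same combinatorics), so the entire weight of the proof falls on the single assertion that \emph{every countable scattered chain of finite Hausdorff rank has finite big Ramsey spectrum}.

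I would prove this by induction on the rank $n<\omega$, but with a crucial strengthening: I claim there are finite numbers $D(n,m)$, depending on $n$ and $m$ alone, such that $T(C_m,S)\le D(n,m)$ for \emph{every} countable scattered chain $S$ with $r_H(S)\le n$ and every $m\ge 1$. The \textbf{uniformity} of the bound across all rank-$(\le n)$ chains is exactly what makes the induction close, since in the inductive step a summand may be an arbitrary chain of smaller rank. For the base case, the chains of rank $\le 1$ are, up to isomorphism, the finite chains together with $\omega$, $\omega^*$ and $\ZZ$; each has finite big Ramsey degrees for $m$-chains (for instance $T(C_m,\omega)=1$, and $\ZZ$ is handled by a short direct Ramsey argument), and taking the maximum over these finitely many types yields a finite $D(1,m)$.

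For the inductive step I write a rank-$n$ chain as $C=\sum_{i\in\ZZ}S_i$ with $r_H(S_i)<n$, and I classify each $e\in\Emb(C_m,C)$ by two data: its \emph{composition} $(m_1,\dots,m_r)\vDash m$, recording the sizes of the maximal blocks of points of $e$ lying in one common summand, and the tuple of induced local embeddings $e_j\in\Emb(C_{m_j},S_{i_j})$ into the summands $i_1<\dots<i_r$ that are actually hit. Given a colouring $\chi$, the aim is to produce a single copy $C'=\sum_{i}S_i'\cong C$, with each $S_i'\cong S_i$, on which $\chi(e)$ depends only on the composition of $e$ and on the tuple of \emph{local big Ramsey types} of the $e_j$. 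Since by the induction hypothesis each such local type ranges over at most $D(n-1,m_j)$ values, summing over the finitely many compositions of $m$ then gives the recursion $D(n,m)=\sum_{(m_1,\dots,m_r)\,\vDash\,m}\prod_{j=1}^{r}D(n-1,m_j)$, which is finite and depends only on $n$ and $m$, completing the induction.

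The heart of the matter, and the step I expect to be the main obstacle, is the construction of that one copy $C'$. Within-summand big Ramsey only tames colourings of configurations confined to a \emph{single} summand, whereas $\chi(e)$ depends on the joint tuple spread across the variably many summands that $e$ meets, and there are infinitely many summands whose local behaviour must be stabilised \emph{simultaneously}. My plan is to decouple the problem into the selection of indices $i_1<\dots<i_r$ and the local embeddings: the index selection is recast as a big Ramsey problem on the index chain $\ZZ$, now \emph{decorated} by the finite alphabet of local types furnished by the uniform induction hypothesis, while the local data are stabilised by a simultaneous, product-type application of that hypothesis inside the summands. Finiteness of the decorating alphabet -- that is, the uniform bound $D(n-1,\cdot)$ -- is precisely what keeps the decorated index problem within reach of the rank-$1$ big Ramsey theorem for $\ZZ$ established in the base case. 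Carrying out this reduction across all summands at once, rather than one summand at a time, is the real crux of the proof.
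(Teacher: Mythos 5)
Your treatment of the non-scattered case and of the infinite-rank lower bound matches the paper (both are quotations of \cite{galvin1,galvin2,devlin} and of \cite{masul-fbrd-chains}), but for the only genuinely new implication --- finite Hausdorff rank implies finite spectrum --- you take a completely different and, as written, incomplete route. The paper does \emph{not} run a direct Ramsey induction on the rank: it observes that \cite{masul-fbrd-chains} already proves finiteness of the spectrum for scattered chains of finite rank \emph{with bounded finite sums}, and reduces the general finite-rank case to that one by showing, via Laver's construction of the algebraically indecomposable types, that up to biembeddability there are only finitely many indecomposable types of each finite rank; hence every finite-rank chain is biembeddable with one having bounded finite sums, and since the big Ramsey spectrum is a biembeddability invariant this finishes the proof. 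Your plan instead re-proves the hard combinatorial theorem from scratch, and the step you yourself flag as ``the real crux'' is exactly where it breaks down.

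Concretely: in the inductive step you write $C=\sum_{i\in\ZZ}S_i$ and propose to find a copy $C'=\sum_i S_i'$ with $S_i'\cong S_i$ on which the colour depends only on the composition and the tuple of local types, by recasting the choice of hit indices as a big Ramsey problem on $\ZZ$ decorated by a finite alphabet. But the summands $S_i$ may realize infinitely many isomorphism types --- this is precisely the ``unbounded finite sums'' phenomenon that the prior work could not handle --- so a copy of $C$ inside $C$ cannot freely re-select indices; in the extreme case the only admissible index map is essentially the identity, and there is no room for any Ramsey argument on the index chain. To give your decorated-$\ZZ$ problem a finite alphabet and a rich supply of sub-copies you would need exactly the paper's new lemma (finitely many biembeddability types of indecomposable chains of each finite rank) together with the invariance of big Ramsey degrees under biembeddability, and neither is stated or proved in your proposal. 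A secondary, quantitative issue: your recursion $D(n,m)=\sum_{(m_1,\dots,m_r)\vDash m}\prod_j D(n-1,m_j)$ omits the contribution of the index pattern itself (already $T(C_1,\ZZ)\ge 2$, since every copy of $\ZZ$ in $\ZZ$ is coinitial and cofinal), though for mere finiteness this would be harmless. The unaddressed inductive step is the genuine gap.
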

\begin{proof}
  Big Ramsey spectra of all non-scattered countable chains are finite by results of Galvin, Laver and Devlin
  \cite{galvin1,galvin2,devlin}. Moreover, it was shown in \cite{masul-fbrd-chains} that countable scattered
  chains of infinite Hausdorff rank do not have finite big Ramsey spectra, and that
  countable scattered chains of finite Hausdorff rank and \emph{with bounded finite sums} 
  (see \cite{masul-fbrd-chains} for details) have finite big Ramsey spectra.
  We are now going to show that, up to biembeddability, there are only finitely
  many algebraically indecomposable types of rank $\le \alpha$ for any finite ordinal $\alpha$,
  whence follows that every countable scattered chain of finite Hausdorff rank is biembeddable with
  a countable scattered chain of finite Hausdorff rank and with bounded finite sums.
  This completes the characterization.

  Recall that Laver \cite{laver-decomposition} built all algebraically indecomposable
  scattered orders of finite rank up to biembeddability as follows:
  start with $\calA_0 = \{1\}$; at a successor step let $\calA_{\alpha+1} = \calA_\alpha \cup \{\phi : \phi$
  is an $\omega$- or $\omega^*$-unbounded sum of types from $\calA_\alpha \}$; and
  at a limit step put $\calA_\alpha = \bigcup_{\beta < \alpha} \calA_\beta$.
  
  Suppose by way of induction that, up to biembeddability, there are only finitely many types of rank $\le \alpha$
  for some finite $\alpha$. Given an $\omega$-unbounded (resp. $\omega^*$-unbounded)  sum of the form $\sum_{i < \omega} \Phi_i$,
  $\Phi_i \in \calA_\alpha$, let $U=\{\Phi_i : i < \omega\}$. By our hypothesis, $U$ is finite up to biembeddability.
  Let $\Psi_1, \ldots, \Psi_k$ be an enumeration of representatives from $U$.
  It can easily be verified by recursively building embeddings that $\sum_{i < \omega} (\Psi_1+\ldots + \Psi_k)$
  is biembeddable with $\sum_{i < \omega} \Phi_i$. Consequently, if there are $T_\alpha$ many
  representatives of elements of $\calA_\alpha$ up to biembeddability, then $T_{\alpha+1} \le 2 (2^{T_\alpha} - 1)$
  (first we choose $\omega$ or $\omega^*$, then we choose a nonempty subset of the $T_\alpha$ representatives to iterate and sum over). 
\end{proof}

\section{Monomorphic structures}
\label{fbrd-finale.sec.mnmf}

As demonstrated by \Fraisse\ in~\cite{Fraisse-ThRel}, and then generalized by Pouzet in~\cite{pouzet-mnmf=chainable},
monomorphic structures are closely related to chains.
Let $L = \{ R_i : i \in I \}$ and $M = \{ S_j : j \in J \}$ be relational languages. An $M$-structure
$\calA = (A, S_j^{\calA})_{j \in J}$ is a \emph{reduct} of an $L$-structure $\calA^*  = (A, R_i^{\calA^*})_{i \in I}$
if there exists a set $\Phi = \{ \phi_j : j \in J \}$ of $L$-formulas such that
for each $j \in J$ (where $\overline a$ denotes a tuple of elements of the appropriate length):
$$
  \calA \models S_j[\overline a] \text{ if and only if } \calA^* \models \phi_j[\overline a].
$$
We then say that $\calA$ is \emph{defined in $\calA^*$ by $\Phi$}, and that it is
\emph{quantifier-free definable in $\calA^*$} if there is a set of quantifier-free formulas $\Phi$
such that $\calA$ is defined in $\calA^*$ by $\Phi$.

A relational structure $\calA = (A, L^\calA)$ is \emph{chainable}~\cite{Fraisse-ThRel}
if there exists a linear order $<$ on $A$ such that $\calA$ is quantifier-free definable in~$(A, \Boxed<)$.
We then say that the linear order~$<$ \emph{chains}~$\calA$.
The following theorem was  proved by
\Fraisse\ for finite relational languages~\cite{Fraisse-ThRel} and for arbitrary relational languages by Pouzet~\cite{pouzet-mnmf=chainable}.

\begin{THM} \cite{Fraisse-ThRel,pouzet-mnmf=chainable}
  An infinite relational structure is monomorphic if and only if it is chainable.
\end{THM}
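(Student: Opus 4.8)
The plan is to prove the two implications separately; the forward direction is routine and the converse carries all of the content. For chainability $\Rightarrow$ monomorphicity: suppose a linear order $<$ chains $\calA=(A,L^\calA)$, and let $F,F'\subseteq A$ be two $n$-element subsets. Since any two finite linear orders of the same size are isomorphic, there is a unique $<$-preserving bijection $g\colon F\to F'$. Each $R\in L$ is defined in $(A,<)$ by a quantifier-free formula whose truth on a tuple depends only on the $<$-order type of that tuple; as $g$ preserves order type, we get $\bar a\in R^\calA$ iff $g\bar a\in R^\calA$ for every tuple $\bar a$ from $F$. Hence $g$ is an isomorphism $\calA\upharpoonright F\to\calA\upharpoonright F'$, so all $n$-element substructures are isomorphic and $\calA$ is monomorphic.

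For the converse I would first reformulate the goal: a linear order $<$ chains $\calA$ exactly when, for each relation, membership of a tuple depends only on its $<$-order type, i.e.\ when any two tuples of the same $<$-order type are matched by a partial isomorphism. Thus the task is to find a linear order under which $\calA$ is order-indiscernible. First I would extract such order-indiscernibility on a large subset: fix any linear order $\prec$ on $A$ and, for each $n$, color the $\prec$-increasing $n$-subsets by the quantifier-free type of the induced substructure. When $L$ is finite there are finitely many colors, so the infinite Ramsey theorem together with a diagonal argument yields an infinite $B\subseteq A$ on which all $\prec$-increasing $n$-tuples realize a single type, for every $n$; equivalently, $\prec\upharpoonright B$ chains $\calA\upharpoonright B$. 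It is worth stressing that monomorphicity is \emph{not} used here --- this step succeeds for any structure --- so the entire weight of the hypothesis must fall on transferring chainability from $B$ to all of $A$.

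The transfer is the heart of the argument. Given $x\in A\setminus B$ and a $\prec$-increasing tuple $\bar b$ from $B$, monomorphicity forces $\{x\}\cup\bar b$ to be isomorphic to the unique $(n{+}1)$-element type, hence to a $\prec$-increasing $(n{+}1)$-tuple from $B$; the isomorphism assigns $x$ one of the $n{+}1$ positions, which designates a cut of $(B,\prec)$ into which $x$ ought to be inserted. The main obstacle is to prove that this cut is \emph{coherent} --- independent of the choice of $\bar b$ --- and I expect to establish it by comparing the positions obtained from nested tuples $\bar b\subseteq\bar b'$, using the indiscernibility of $B$ (so that the type of $\{x\}\cup\bar b'$ restricts consistently to that of $\{x\}\cup\bar b$) together with the uniqueness of finite types. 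Granting coherence, one orders the points of $A\setminus B$ among themselves by the same device and verifies that the resulting linear order $<$ on all of $A$ is order-indiscernible: tuples inside $B$ are handled by the extraction step, tuples involving inserted points by the construction, and mixed tuples by the interlocking of the two, whence $<$ chains $\calA$.

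Finally, for arbitrary relational languages and uncountable domains --- Pouzet's extension --- the extraction step must be strengthened, since with infinitely many relation symbols the type-coloring may use infinitely many colors and with an uncountable base Ramsey's theorem no longer applies; here one replaces it by the Erd\H os--Rado partition theorem or by a compactness/ultrafilter argument to obtain the indiscernible set, after which the transfer proceeds as above. I expect the coherence of the insertion to be the single hardest point.
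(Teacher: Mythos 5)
The paper does not prove this statement at all: it is quoted verbatim from \Fraisse\ and Pouzet with citations, so there is no in-paper argument to compare yours against. Judged on its own, your forward direction (chainable $\Rightarrow$ monomorphic) is correct and complete. The converse, however, is an outline with a genuine gap exactly at the point you yourself flag as the ``heart of the argument,'' and the gap is not merely that details are omitted --- the step as described would fail. You assert that for $x \in A \setminus B$ and a $\prec$-increasing tuple $\bar b$ from $B$, the isomorphism of $\{x\} \cup \bar b$ with the unique $(n+1)$-element type ``assigns $x$ one of the $n+1$ positions, which designates a cut.'' This presupposes that the position of $x$ is determined, i.e.\ that the finite types are rigid. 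Monomorphic structures routinely violate this: for the empty language every position works; for the countable dense cyclic order $C(x,y,z)$ (monomorphic, and chainable by cutting the circle) the induced structure on $\{x\}\cup\bar b$ determines the position of $x$ only up to a cyclic shift. So the ``cut'' is not well defined even for a single $\bar b$, and coherence in your sense is false as stated; a correct argument must first quotient by or break these symmetries, which is genuine work. You also leave unaddressed the tuples lying entirely in $A \setminus B$ (and the ordering of distinct points of $A\setminus B$ falling into the same cut), which is where much of the difficulty of the verification sits. Phrases such as ``I expect to establish it by\dots'' signal that this is a plan, not a proof.

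Two smaller remarks. First, your worry that with an infinite language the type-coloring has infinitely many colors is unfounded here: monomorphicity bounds the number of quantifier-free $n$-types actually realized by $n!$, so Ramsey's theorem still applies on a countably infinite subset; the real content of Pouzet's extension lies elsewhere. Second, the classical routes avoid your coherence problem altogether: for finite languages one can combine Frasnay's theorem (every sufficiently large \emph{finite} monomorphic structure is chainable, with a uniform bound on the defining formulas) with a compactness argument gluing the finite chainings, and Pouzet's general case runs through the theory of almost-chainable relations. If you want to salvage your insertion argument, the missing idea is a device that rigidifies the indiscernible set $B$ before inserting outside points.
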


As a consequence of the above characterization of monomorphic structures, we have the following:

\begin{THM}
  Let $\calS$ be a countable monomorphic structure. If $\calS$ is chainable by a non-scattered chain, or by a
  scattered chain of finite Hausdorff rank, then $\spec(\calS)$ is finite.
\end{THM}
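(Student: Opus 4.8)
The plan is to transfer the finiteness of the big Ramsey spectrum of the chaining order up to $\calS$, by understanding how the embeddings of a finite substructure $\calS_n$ into $\calS$ relate to the order-embeddings of finite chains. Fix a linear order $<$ on the underlying set $A$ that chains $\calS$, and write $C = (A, \Boxed{<})$; by hypothesis $C$ is countable and either non-scattered or scattered of finite Hausdorff rank, so Theorem~\ref{fbrd-finale.thm.MAIN} tells us that $\spec(C)$ is finite. Accordingly I set $t_n = T(C_n, C) < \infty$ for each $n$, where $C_n = (\{1, \ldots, n\}, <)$ is the unique $n$-element chain, and I fix quantifier-free $\{<\}$-formulas $\phi_j$ defining $\calS$ in $C$.

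First I would pin down $\calS_n$ concretely: evaluating the $\phi_j$ on $C_n$ produces an $n$-element structure which, since $\calS$ is monomorphic, is isomorphic to $\calS_n$, so I may take $\calS_n$ to be exactly this structure on the ordered set $C_n$. The elementary but crucial remark is that any order-embedding $C \hookrightarrow C$ preserves every quantifier-free $\{<\}$-formula and is therefore automatically a self-embedding of $\calS$; hence $\Emb(C, C) \subseteq \Emb(\calS, \calS)$, and any copy of $C$ extracted from the chain's Ramsey property will double as a copy of $\calS$.

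The heart of the argument is a finite decomposition of $\Emb(\calS_n, \calS)$. An embedding $e \colon \calS_n \hookrightarrow \calS$ need not respect $<$, but if I list its image $<$-increasingly as $a_1 < \cdots < a_n$ and write $e(i) = a_{\sigma(i)}$, then $e$ appears as an honest order-embedding $\bar a \in \Emb(C_n, C)$ twisted by a permutation $\sigma$ of $\{1, \ldots, n\}$. Since the truth of a quantifier-free $\{<\}$-formula on a tuple depends only on its $<$-order type, a short computation identifies the tuple $e(\overline x)$ in $C$ with the tuple $\sigma(\overline x)$ in $C_n$ up to order type, and shows that $e$ is an embedding precisely when $\sigma \in \Aut(\calS_n)$. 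I would then verify that $(\bar a, \sigma) \mapsto (i \mapsto a_{\sigma(i)})$ is a bijection $\Emb(C_n, C) \times \Aut(\calS_n) \to \Emb(\calS_n, \calS)$, exhibiting $\Emb(\calS_n, \calS)$ as a disjoint union of $|\Aut(\calS_n)| \le n!$ copies of $\Emb(C_n, C)$.

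Finally I would run the colouring transfer. Given a $k$-colouring $\chi$ of $\Emb(\calS_n, \calS)$, I push it through the bijection to a single colouring $\tilde\chi \colon \Emb(C_n, C) \to k^{\Aut(\calS_n)}$ recording, above each increasing tuple $\bar a$, the $\chi$-colours of all $|\Aut(\calS_n)|$ embeddings lying over it. Applying the big Ramsey property of $C$ — whose degree bound $t_n$ is, by definition, uniform over all finite numbers of colours, and so covers the $k^{|\Aut(\calS_n)|}$ colours of $\tilde\chi$ — produces $g \in \Emb(C, C) \subseteq \Emb(\calS, \calS)$ on which $\tilde\chi$ attains at most $t_n$ values. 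Because $g$ preserves $<$, the image of $g \circ e$ listed increasingly is $g(a_1) < \cdots < g(a_n)$; hence an embedding $e$ corresponding to $(\bar a, \sigma)$ satisfies that $g \circ e$ corresponds to $(g \circ \bar a, \sigma)$ with $g \circ \bar a \in g \circ \Emb(C_n, C)$, so post-composition with $g$ commutes with the decomposition. Consequently $\chi$ attains at most $t_n \cdot |\Aut(\calS_n)|$ values on $g \circ \Emb(\calS_n, \calS)$, giving $T(\calS_n, \calS) \le n! \cdot T(C_n, C) < \infty$ for every $n$ and hence $\spec(\calS) \in \NN^\NN$. The one genuinely delicate point is the decomposition in the third step: recognising that the potentially order-violating embeddings of $\calS_n$ organise into only finitely many shapes, each a copy of the order-embeddings of $C_n$, so that all the unbounded Ramsey combinatorics is carried by the chain $C$ while the structure contributes only the bounded overhead $|\Aut(\calS_n)|$.
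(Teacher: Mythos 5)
Your proof is correct, but it takes a genuinely different route from the paper. The paper disposes of the theorem by citing a general transfer principle (\cite[Theorem~7.1]{masul-rdbas}): for a finite relational language $L$, a hereditary class $\KK^*$ with a universal element $\calS^*$ having finite big Ramsey degrees, and a quantifier-free definable reduct class $\KK$, the defined structure $\calS$ again has finite big Ramsey degrees; this is applied with $\calS^* = C$ the chaining order. You instead prove the transfer from scratch in the special case of chains, and the argument holds up: the observation that quantifier-free $\{<\}$-formulas depend only on the order type of a tuple gives both $\Emb(C,C) \subseteq \Emb(\calS,\calS)$ and the key bijection $\Emb(C_n, C) \times \Aut(\calS_n) \to \Emb(\calS_n, \calS)$, $(\bar a, \sigma) \mapsto \bar a \circ \sigma$ (the increasing enumeration of the image of any embedding of $\calS_n$ is itself an embedding, and the residual permutation is forced to be an automorphism of $\calS_n$); the product colouring $\tilde\chi \colon \Emb(C_n,C) \to k^{\Aut(\calS_n)}$ is legitimate because the degree bound $t_n$ is by definition uniform in the number of colours, and post-composition with an order-embedding $g$ visibly commutes with the decomposition. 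What your approach buys is an explicit quantitative bound $T(\calS_n, \calS) \le |\Aut(\calS_n)| \cdot T(C_n, C) \le n!\, T(C_n, C)$ and a self-contained exposition; what the paper's citation buys is brevity and generality (the cited theorem applies to quantifier-free reducts of arbitrary structures over finite relational languages, not only of chains, and also delivers universality of $\calS$ for the reduct class). One presentational remark: your identification of $\calS_n$ with the structure defined by $\Phi$ on $C_n$ silently uses monomorphy of $\calS$ to know that \emph{every} $n$-element substructure arises this way; you do flag this, but it deserves the one-line justification that any $n$-element subset $B \subseteq A$ carries the structure obtained by transporting $\Phi$ along the unique order-isomorphism $C_n \to C{\restriction}_B$.
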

\begin{proof}
  Let $L$ be a relational language. Recall that a class $\KK$ of $L$-structures is called hereditary if the following holds:
  if $\calA \in \KK$ and $\calB$ is an $L$-structure which embeds into $\calA$, then $\calB \in \KK$. 
  An $L$-structure $\calU$ is universal for $\KK$ if $\calU$ embeds every $\calA \in \KK$.

  Let us also recall \cite[Theorem~7.1]{masul-rdbas}.
  Let $L = \{ R_1, \ldots, R_n \}$ be a finite relational language, let
  $M = \{ S_j : j \in J \}$ be a relational language and let
  $\Phi = \{ \phi_j : j \in J \}$ be a set of quantifier-free $L$-formulas.
  Let $\KK^*$ be a hereditary class of at most countably infinite $L$-structures and let
  $\KK$ be the class of all the $M$-structures which are definable by $\Phi$ in~$\KK^*$.
  Let $\calS^* \in \KK^*$ be universal for $\KK^*$ and let $\calS \in \KK$ be the $M$-structure
  defined in $\calS^*$ by $\Phi$. Then \cite[Theorem~7.1]{masul-rdbas} shows that
  $\calS$ is universal for $\KK$, and
  if $\calS^*$ has finite big Ramsey degrees, then so does $\calS$.

  Let $\calS$ be a countable monomorphic structure which is chainable by a chain $C$ which is either non-scattered, or
  scattered of finite Hausdorff rank. By Theorem~\ref{fbrd-finale.thm.MAIN} we know that $\spec(C)$ is finite.
  As we have just seen, $\spec(\calS)$ is then finite by \cite[Theorem~7.1]{masul-rdbas}.
\end{proof}

\paragraph{Open problem.}
Let $\calS$ be a countable monomorphic structure which is chainable by a scattered chain of infinite Hausdorff rank.
Is it true that $\spec(\calS)$ then cannot be finite?

\section*{Declarations}

\paragraph{Competing interests.} No competing interests.

\paragraph{Authors' contributions.} All authors have contributed to the main result.
All authors have reviewed the paper.

\paragraph{Funding.}
The first author was supported by the \href{http://www.fields.utoronto.ca/}{Fields Institute for Research in Mathematical Sciences}.
The second author was supported by the Science Fund of the Republic of Serbia, Grant No.~7750027:
Set-theoretic, model-theoretic and Ramsey-theoretic phenomena in mathematical structures: similarity and diversity -- SMART.

\paragraph{Availability of data and materials.}
No datasets were used or generated in the course of the preparation of the paper.

\end{document}